\documentclass[12pt,reqno]{amsart}
\usepackage{latexsym,amsthm,amsmath,amssymb,mathrsfs,mathtools,xcolor}
\usepackage[T1]{fontenc}
\usepackage[utf8]{inputenc}
\usepackage[mathscr]{eucal}
\usepackage{enumerate}

\usepackage[normalem]{ulem}

\usepackage{tikz}
\usetikzlibrary{arrows,calc,patterns,cd}
\usepackage{wrapfig}

\setlength{\oddsidemargin}{1pt}
\setlength{\evensidemargin}{1pt}
\setlength{\topmargin}{1pt}       
\setlength{\textheight}{650pt}    
\setlength{\textwidth}{460pt}     

\belowdisplayskip=18pt plus 6pt minus 12pt \abovedisplayskip=18pt plus 6pt minus 12pt
\parskip 8pt plus 1pt

\def\mvint_#1{\mathchoice
          {\mathop{\vrule width 6pt height 3 pt depth -2.5pt
                  \kern -9pt \intop}\limits_{\kern -3pt #1}}%
          {\mathop{\vrule width 5pt height 3 pt depth -2.6pt
                  \kern -6pt \intop}\nolimits_{#1}}%
          {\mathop{\vrule width 5pt height 3 pt depth -2.6pt
                  \kern -6pt \intop}\nolimits_{#1}}%
          {\mathop{\vrule width 5pt height 3 pt depth -2.6pt
                  \kern -6pt \intop}\nolimits_{#1}}}

\newcommand{\R}{\mathbb R}

\newtheorem{theorem}{Theorem}
\newtheorem*{theorem*}{Theorem}

\newtheorem{corollary}[theorem]{Corollary}

\theoremstyle{definition}
\newtheorem{remark}[theorem]{Remark}
\newtheorem*{remark*}{Remark}
\newtheorem{example}[theorem]{Example}

\usepackage{setspace}

\title[A note on spaces supporting Poincar\'e inequalities]{A note on metric-measure spaces supporting Poincar\'e inequalities}
\author[Alvarado]{Ryan Alvarado}
\address{Ryan Alvarado,\newline \indent Department of Mathematics and Statistics, Amherst College, 
\newline \indent 502 Seeley Mudd, Amherst,
Massachusetts 01002}
\email{rjalvarado\@@amherst.edu}

\author[Haj\l{}asz]{Piotr Haj\l{}asz}
\address{Piotr Haj\l{}asz,\newline \indent Department of Mathematics, University of Pittsburgh, \newline \indent 301 Thackeray Hall, Pittsburgh,
Pennsylvania 15260}
\email{hajlasz@pitt.edu}
\thanks{P.H. was supported by NSF grant DMS-1800457.}

\keywords{metric-measure spaces, Sobolev-Poincar\'e inequality, doubling measure, analysis on metric spaces}
\subjclass[2010]{30L99, 46E35}

\begin{document}
\maketitle
\sloppy

\begin{center}
{\em Dedicated to Professor Vladimir Maz'ya on the occasion of his 80th birthday.}
\end{center}

\begin{abstract}
Using a method of Korobenko, Maldonado and Rios we show a new characterization of doubling metric-measure spaces supporting Poincar\'e inequalities without assuming a priori that the measure is doubling.
\end{abstract}

Non-smooth functions have played a key role in analysis since the nineteenth century. One fundamental development in this vein came with the introduction of Sobolev spaces, which turned out to be a key tool in studying nonlinear partial differential equations and calculus of variations. Although classically Sobolev functions themselves were not smooth, they were defined on smooth objects such as domains in the Euclidean space or, more generally, Riemannian manifolds. By the late 1970s it became well recognized that several results in real analysis required little structure from the underlying ambient space, and could be generalized to non-smooth settings,
such as to the so-called spaces of homogeneous type. The latter spaces are (quasi)metric spaces equipped with a doubling Borel measure (see \cite{CoWe71,CoWe77}). In fact, maximal functions, Hardy spaces, functions of bounded mean oscillation, and singular integrals of Calder\'on-Zygmund-type all continue to have a fruitful theory in the context of spaces of homogeneous type. However, this rich theory was, in a sense, only zeroth-order analysis given that no derivatives were involved. The study of first-order analysis with suitable generalizations of derivatives, a fundamental theorem of calculus, and Sobolev spaces, in the setting of spaces of homogeneous type, was initiated in the 1990s. This area, known as analysis on metric spaces, has since grown into a multifaceted theory which continues to play an important role in many areas of contemporary mathematics.
For an introduction to the subject we recommend \cite{AlMi,AGS,AT,BB,cheeger,hajlasz,SMP,HeinonenK,HKST,shanmugalingam}.

One of the main objects of study in analysis on metric spaces are so called spaces supporting Poincar\'e inequalities introduced in \cite{HeinonenK}. To define this notion, recall that a {\em metric-measure space} $(X,d,\mu)$ is a metric space $(X,d)$ with a Borel measure $\mu$ such that $0<\mu\big(B(x,r)\big)<\infty$ for all $x\in X$ and all $r\in(0,\infty)$, where $B(x,r)$ denotes the (open) metric ball with center $x$ and radius $r$, i.e., $B(x,r):=\{y\in X:\, d(x,y)<r\}$. If the measure $\mu$ is {\em doubling}, that is, if
there exists a finite constant $C>0$ such that
$\mu(2B)\leq C\mu(B)$ for all balls $B\subseteq X$, then we call $(X,d,\mu)$ a {\em doubling metric-measure space}.
The notation $\tau B$ denotes the dilation of a ball $B$ by a factor $\tau\in(0,\infty)$, i.e., $\tau B:=B(x,\tau r)$.
A Borel function $g:X\to[0,\infty]$ is said to be an {\em upper gradient} of another Borel function $u:X\to\mathbb{R}$ if
\begin{equation}
|u(x)-u(y)|\leq\int_{\gamma_{xy}} g\,ds,
\end{equation}
holds for each $x,y\in X$ and all rectifiable curves $\gamma_{xy}$ joining $x,y$. Finally, a  metric-measure space $(X,d,\mu)$ is
said to {\em support a $p$-Poincar\'e  inequality}, $p\in[1,\infty)$, if there exist
constants $C\in(0,\infty)$ and $\sigma\in[1,\infty)$
such that
\begin{equation}
\label{ppoin}
\mvint_{B} |u-u_{B}|\,d\mu\leq Cr
\left(\,\, \mvint_{\sigma B}g^{p}\, d\mu\right)^{1/p},
\end{equation}
whenever $B$ is a ball of radius $r\in(0,\infty)$,
$u\in L^1_{\rm loc}(X,\mu)$, and $g:X\to[0,\infty]$   is an upper gradient of $u$. 
Here and in what follows the barred integral and $f_E$ stand for the integral average:
$$
f_E=\mvint_Ef\, d\mu =\frac{1}{\mu(E)}\int_E f\, d\mu,
$$
where $E$ is a $\mu$-measurable set of positive measure.
To be consistent with the definition of the upper gradient, in what follows we will always assume that functions $u\in L^1_{\rm loc}(X,\mu)$ are 
everywhere finite Borel representatives.
The above definitions of the upper gradient and spaces supporting Poincar\'e inequalites
are due to Heinonen and Koskela in \cite{HeinonenK} (see also \cite{HKST} for a more detailed exposition).

It was proved in \cite{SMP2} and \cite[Theorem~5.1]{SMP} that if a doubling metric-measure space supports a Poincar\'e inequality, then
the $p$-Poincar\'e inequality
self-improves in the sense that for some $q\in(p,\infty)$ and
$C'\in(0,\infty)$, there holds
\begin{equation}
\label{REW-1}
\left(\, \mvint_{B} |u-u_{B}|^{q}\, d\mu\right)^{1/q}\leq
C'r
\left(\,\, \mvint_{5\sigma B}g^{p}\, d\mu\right)^{1/p},
\end{equation}
whenever $B$ is a ball of radius $r\in(0,\infty)$,
$u\in L^1_{\rm loc}(X,\mu)$, and $g:X\to[0,\infty]$  is an upper gradient of $u$. 
%
%

The purpose of this note is to show that the family of inequalities in \eqref{REW-1} on a metric measure space imply that the underlying measure is doubling, and thus providing a characterization of doubling metric-measure spaces supporting Poincar\'e inequalities without assuming a priori that the measure is doubling, see Theorem~\ref{EPequiv}, below. This result is a minor  refinement of a beautiful result in \cite{korobenkomr}, where it was proved that in a related context, a family of weak Sobolev inequalities imply that the measure is doubling. However, the authors considered Sobolev inequalities where the balls had the same radius on both sides, and such a condition is stronger than the one in \eqref{REW-1}. Moreover, they did not address the important applications to Sobolev spaces supporting Poincar\'e inequalities. 

While the proof presented below is almost the same as the one in \cite{korobenkomr}, it is important to provide details:\ the proof employs an infinite iteration of Sobolev inequalities and since now we have balls of different size on both sides, it is not obvious without checking details that this will not cause estimates to blow up. This paper should be regarded as a supplement to the work of \cite{korobenkomr} and an advertisement of their work. 
Different, but related iterative arguments to the one presented below were used in \cite{carron,gorka,hajlaszkt1,hajlaszkt2,hebey} in the proofs that a Sobolev inequality implies a measure density condition. Other applications of a method developed in \cite{korobenkomr} are given in \cite{AGH,korobenko}.

We now state the main result of this note.
\begin{theorem}
\label{EPequiv}
Let $(X,d,\mu)$ be a metric-measure space and fix $p\in[1,\infty)$.
Then the following two statements are equivalent.
\begin{enumerate}
\item[(a)] The measure $\mu$ is doubling and the space $(X,d,\mu)$
supports a $p$-Poincar\'{e} inequality.
\vskip.08in

\item[(b)] There exist $q\in(p,\infty)$,  $C_P\in [1,\infty)$, and $\sigma\in[1,\infty)$
such that
\begin{equation}
\label{eq20}
\left(\, \mvint_{B} |u-u_{B}|^{q}\, d\mu\right)^{1/q}\leq
C_Pr
\left(\,\, \mvint_{\sigma B}g^{p}\, d\mu\right)^{1/p},
\end{equation}
whenever $B$ is a ball of radius $r\in(0,\infty)$, $u\in L^1_{\rm loc}(X,\mu)$,
and $g:X\to[0,\infty]$ is an upper gradient of $u$.
\end{enumerate}
\end{theorem}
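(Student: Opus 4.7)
I would address the two directions separately. The implication $(a) \Rightarrow (b)$ is exactly the self-improvement result \eqref{REW-1}, proved in \cite{SMP2} and \cite[Theorem~5.1]{SMP}, so no new work is required for this direction. The $p$-Poincar\'e portion of $(b) \Rightarrow (a)$ is an immediate consequence of \eqref{eq20} via H\"older's inequality: for any ball $B$,
\begin{equation*}
\mvint_B |u-u_B|\, d\mu \leq \left(\mvint_B |u-u_B|^q\, d\mu\right)^{1/q} \leq C_P r \left(\mvint_{\sigma B} g^p\, d\mu\right)^{1/p}.
\end{equation*}

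The heart of the argument is showing that $(b)$ forces $\mu$ to be doubling. For this, I would adapt the iterative strategy of Korobenko, Maldonado, and Rios~\cite{korobenkomr}. The idea is to plug suitably chosen Lipschitz cutoff functions into \eqref{eq20} at a geometric sequence of scales. Given $x_0 \in X$ and $r > 0$, a natural choice is the trapezoidal cutoff $u(x) := \max(0, 1 - d(x,x_0)/r)$, which is supported in $B(x_0, r)$ and admits $g := (1/r)\chi_{B(x_0, r)}$ as an upper gradient. Applying \eqref{eq20} to this pair on the ball $B(x_0, 2r)$, bounding the left-hand side from below on the sub-ball $B(x_0, r/2)$ (where $u \geq 1/2$, while the mean $u_{B(x_0, 2r)}$ is controlled by $\mu(B(x_0, r))/\mu(B(x_0, 2r))$), and bounding the right-hand side via the support of $g$, one arrives at a recursive inequality of the schematic form
\begin{equation*}
\frac{\mu(B(x_0, r/2))}{\mu(B(x_0, r))} \leq C \left(\frac{\mu(B(x_0, r))}{\mu(B(x_0, 2r))}\right)^{q/p - 1},
\end{equation*}
valid whenever the right-hand ratio is, say, at most $1/4$. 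The exponent $q/p - 1 > 0$, intrinsic to the self-improved Sobolev--Poincar\'e inequality, is the crucial feature that makes infinite iteration across the scales $r, 2r, 4r, \ldots$ effective: iterating rules out unbounded behavior of the ratios $\mu(2B)/\mu(B)$ and produces a uniform doubling constant.

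The main obstacle---and the sole reason this note is needed beyond a bare citation of \cite{korobenkomr}---is that \eqref{eq20} involves balls of different radii on its two sides ($B$ and $\sigma B$), whereas \cite{korobenkomr} worked with Sobolev inequalities in which both integrals are over the same ball. Consequently, each step of the iteration drifts by a factor of $\sigma$ in the scale, and a priori this drift could accumulate in a way that forces the constants to blow up over the infinite iteration. The principal content of the proof is therefore to track this $\sigma$-drift carefully and confirm that the recursion remains stable. Once this bookkeeping is carried out, the conclusion follows along the same lines as \cite{korobenkomr}.
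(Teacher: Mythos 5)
Your reductions for (a) $\Rightarrow$ (b) and for the $p$-Poincar\'e part of (b) $\Rightarrow$ (a) match the paper, but the doubling argument has a genuine gap. The recursive inequality you extract from a single cone cutoff per scale, $\mu(B(x_0,r/2))/\mu(B(x_0,r)) \le C\,\big(\mu(B(x_0,r))/\mu(B(x_0,2r))\big)^{q/p-1}$, is correct, yet it cannot be iterated ``across the scales $r,2r,4r,\dots$'' to a uniform doubling constant: it only bounds the ratio at a smaller scale from above by the ratio at a larger scale, i.e.\ it propagates smallness of $\mu(\tfrac12 B)/\mu(B)$ downward in scale, and it never yields a lower bound at any scale unless one already has a seed lower bound somewhere---which is exactly what is to be proved. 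Nothing in the definition of a metric-measure space prevents $\mu(B(x_0,s))$ from decaying arbitrarily fast as $s\to 0$ (only $0<\mu(B)<\infty$ is assumed), so the downward propagation of smallness produces no contradiction; for instance, at a fixed center a ratio profile like $f(s)=e^{-1/s}$ is consistent with your recursion (when $q/p-1\le 2$) although $\inf_s f(s)=0$. Hence the step ``iterating rules out unbounded behavior of the ratios and produces a uniform doubling constant'' is unjustified, and as described the scheme does not close.

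The paper's (and Korobenko--Maldonado--Rios') mechanism is different in precisely the place you leave vague: one fixes the ball $B$ once and for all, applies \eqref{eq20} on the single ball $2\sigma B$, and feeds in an \emph{infinite family} of cutoffs $u_j$ adapted to nested balls $B_{j+1}\subset B_j$ with radii $r_j=(2^{-j-1}+2^{-1})r$ decreasing from $\tfrac34 r$ to $\tfrac12 r$ and Lipschitz constants $2^{j+2}/r$. This gives the recursion \eqref{HD-4} in $j$ at a \emph{fixed} scale, with exponent $1/q$ on the left and $1/p$ on the right; raising to the powers $p/\alpha^{j-1}$, $\alpha=q/p>1$, makes the constants $2^{j+4}$ enter with geometrically decaying exponents (so the infinite product converges), makes the factors $\mu(2\sigma B)^{-(q-p)/(pq)}$ accumulate to exactly one power of $\mu(2\sigma B)$, and uses $\mu(B_j)^{1/\alpha^{j-1}}\to 1$ (since $\mu(\tfrac12 B)\le\mu(B_j)\le\mu(B)$) to pass to the limit and obtain $\mu(2\sigma B)\le(\sigma C_P)^{pq/(q-p)}A(p,q)\,\mu(B)$ directly. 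This also shows that the difficulty you single out ($\sigma$-drift accumulating over the iteration) is not where the work lies: all cutoffs are supported in $B$ and the inequality is used on the one ball $2\sigma B$, so $\sigma$ enters only through $\mu(2\sigma^2 B)\ge\mu(2\sigma B)$; the essential point is the fixed-scale iteration with compounding exponents, which your one-cutoff-per-scale recursion does not reproduce.
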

\begin{remark}
We could assume that \eqref{eq20} holds with $C_P\in (0,\infty)$, but the estimates presented below
are more elegant if $C_P\geq 1$. Clearly, if \eqref{eq20} holds with a constant strictly greater than zero, then we can increase it to a constant greater than or equal to $1$.
\end{remark}

A positive locally integrable function $0<w\in L^1_{\rm loc}(\R^n)$ defines an absolutely continuous measure 
$d\mu=w(x)\, dx$ with the weight $w$. A class of the so called {\em $p$-admissible} weights plays a fundamental role in 
the nonlinear potential theory \cite{HKM}. To make the presentation brief, we will not recall the definition of a $p$-admissible weight, but we refer the reader to \cite{HKM} for details. 
As an immediate consequence of Theorem~\ref{EPequiv} and \cite[Theorem~2]{SMP2} we obtain a new characterization of $p$-admissible weights. A variant of this result has also been proved in \cite{korobenkomr}.
\begin{corollary}
A function $0<w\in L^1_{\rm loc}(\R^n)$ is a $p$-admissible weight for some $1<p<\infty$, if and only if 
there exist $q\in (p,\infty)$, $C\in [1,\infty)$ and $\sigma\in [1,\infty)$ such that
$$
\left(\,\mvint_B |u-u_B|^q\, d\mu\right)^{1/q}\leq
Cr\left(\,\mvint_{\sigma B} |\nabla u|^p\, d\mu\right)^{1/p}
$$
whenever $B\subset\R^n$ is a ball of radius $r\in (0,\infty)$, $u\in C^\infty(\sigma B)$ and $d\mu=w\, dx$.
\end{corollary}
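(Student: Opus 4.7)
The implication (a)$\Rightarrow$(b) needs no new work: it is exactly the self-improvement of the Poincar\'e inequality already recalled in \eqref{REW-1}. For (b)$\Rightarrow$(a), the $p$-Poincar\'e inequality \eqref{ppoin} drops out of \eqref{eq20} immediately via H\"older's inequality on the left-hand side (since $q\ge 1$). All the substance therefore lies in deducing the doubling property of $\mu$ from \eqref{eq20} alone.

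Following the template of Korobenko, Maldonado and Rios, I would test \eqref{eq20} against the standard Lipschitz cutoff between two concentric balls. Fix $x_0\in X$ and radii $0<r<R$, and set
\[
u(x) = \max\!\left(0,\, \min\!\left(1,\, \tfrac{R-d(x,x_0)}{R-r}\right)\right),
\]
so that $u\equiv 1$ on $B(x_0,r)$, $u\equiv 0$ outside $B(x_0,R)$, and $g:=(R-r)^{-1}\chi_{B(x_0,R)\setminus B(x_0,r)}$ is an upper gradient of $u$. Applying \eqref{eq20} on $B=B(x_0,R)$ and splitting into the cases $u_B \leq \tfrac12$ and $u_B \geq \tfrac12$ (bounding the left-hand side below by $|u-u_B| \geq \tfrac12$ on $\{u=1\}=B(x_0,r)$ in the first case, and by the analogous estimate on the set $\{u\leq \tfrac14\}$ in the second), one extracts a recursion of the schematic form
\[
\frac{\mu(B(x_0,r))}{\mu(B(x_0,R))} \leq \left(\frac{C\,R}{R-r}\right)^{q}\!\left(\frac{\mu(B(x_0,R))}{\mu(B(x_0,\sigma R))}\right)^{q/p}\!.
\]

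Next, iterate this relation along the geometric sequence of radii $R_n=\sigma^{n} R_0$. Writing $\varepsilon_n := \mu(B(x_0,R_n))/\mu(B(x_0,R_{n+1}))$, the recursion collapses to $\varepsilon_n \leq K^{q}\,\varepsilon_{n+1}^{q/p}$ with $K=K(C_P,p,q,\sigma)$, and $N$-fold iteration yields
\[
\varepsilon_n \leq \bigl(K^{pq/(q-p)}\,\varepsilon_{n+N}\bigr)^{(q/p)^{N}}\!,
\]
where the exponent $\tfrac{pq}{q-p}$ arises from summing the convergent geometric series $\sum_{j\ge 0}(p/q)^{j}$. Since $\varepsilon_n$ is a strictly positive ratio of finite measures, the base $K^{pq/(q-p)}\varepsilon_{n+N}$ cannot be $<1$ uniformly in $N$ without forcing $\varepsilon_n=0$; hence $\varepsilon_{n+N}\geq K^{-pq/(q-p)}=:c$ for all sufficiently large $N$. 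Because $R_0$ was arbitrary, this yields the $\sigma$-doubling estimate $\mu(B(x_0,\sigma R))\leq c^{-1}\mu(B(x_0,R))$ at every scale, and a trivial iteration upgrades this to the standard doubling $\mu(2B)\leq C'\mu(B)$.

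The main obstacle, and the reason for writing the details out, is precisely that the right-hand side of \eqref{eq20} lives on $\sigma B$ rather than $B$, so the iteration runs through arbitrarily large balls and the constants could a priori compound without control. What rescues the argument is the strict inequality $q>p$: the convergence of $\sum(p/q)^{j}$ is exactly what keeps the compounded $K$-exponent $\sum_{j=0}^{N-1}(q/p)^{j}$ in balance with the exponent $(q/p)^{N}$ appearing on $\varepsilon_{n+N}$. In \cite{korobenkomr} the two sides of the Sobolev inequality lived on the same ball, so this bookkeeping was automatic; here, with the factor $\sigma$ in play, it must be tracked carefully at every step of the infinite iteration.
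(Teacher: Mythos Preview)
Your outward iteration along $R_n=\sigma^nR_0$ does not close. From $\varepsilon_n\le K^q\varepsilon_{n+1}^{q/p}$ one indeed gets
\[
\varepsilon_n\le K^{-pq/(q-p)}\bigl(K^{pq/(q-p)}\varepsilon_{n+N}\bigr)^{(q/p)^N},
\]
but the step ``hence $\varepsilon_{n+N}\ge c$ for all sufficiently large $N$'' is not justified: the contradiction argument only rules out the existence of a fixed $\theta<1$ and a subsequence $N_k\to\infty$ with $K^{pq/(q-p)}\varepsilon_{n+N_k}\le\theta$, which yields merely $\liminf_{N}\varepsilon_{n+N}\ge c$. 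That liminf concerns arbitrarily large radii and says nothing about $\varepsilon_0=\mu(B(x_0,R_0))/\mu(B(x_0,\sigma R_0))$ itself. Indeed, rewriting the recursion as $\varepsilon_{n+1}\ge K^{-p}\varepsilon_n^{p/q}$ shows it is perfectly consistent with $\varepsilon_0$ arbitrarily small (the lower bounds then climb back up toward $c$ as $n\to\infty$). Varying $R_0$ does not help either, since the threshold ``sufficiently large $N$'' would depend on $R_0$. Thus the argument never produces the uniform bound $\mu(B(x_0,\sigma R))\le c^{-1}\mu(B(x_0,R))$ at a fixed scale $R$.

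The paper runs the iteration in the opposite direction. One fixes $B=B(x,r)$ and applies \eqref{eq20} once and for all on the \emph{fixed} ball $2\sigma B$ (so the $\sigma$--dilation enters only once), then inserts cutoffs $u_j$ between nested balls $B_{j+1}\subset B_j$ with radii $r_j=(2^{-j-1}+2^{-1})r\in(r/2,\,3r/4]$. The resulting recursion relates $\mu(B_{j+1})$ to $\mu(B_j)$ with $\mu(2\sigma B)$ appearing as a fixed parameter rather than as the next term of the sequence. Setting $P_j:=\mu(B_j)^{(p/q)^{j-1}}$, one has $P_j\to1$ because $0<\mu(\tfrac12 B)\le\mu(B_j)\le\mu(B)<\infty$ for every $j$; passing to the limit in the telescoping product then gives $\mu(2\sigma B)\le C\mu(B)$ directly. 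The issue you flag in your last paragraph---that the $\sigma$--factor must not compound through the iteration---is precisely what forces the iteration to stay inside a single fixed ball rather than march outward to infinity.
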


\begin{example}
\label{rem}
The following example from \cite[Example~6.2]{BBJ}  shows a metric-measure space with a non-doubling measure that supports the $(p,p)$-Poincar\'e inequality \eqref{eq21} for all $1\leq p<\infty$. This shows that Theorem~\ref{EPequiv} is sharp in the sense that $q>p$ in \eqref{eq20} cannot be replaced by $q=p$. Let $X=[0,\infty)$ be equipped with the Euclidean metric $d(x,y)=|x-y|$ and the measure $d\mu=w(x)\, dx$, where $w(x)=\min\{1,x^{-1}\}$. First observe that $w$ is not doubling. Indeed, for $r>1$, $\mu(B(2r,r))=\mu((r,3r))=\int_r^{3r} x^{-1}\, dx=\log 3$, but
$\mu(2B(2r,r))=\mu((0,4r))>\int_1^{4r} x^{-1}=\log (4r)\to\infty$ as $r\to\infty$. It remains to show the $(p,p)$-Poincar\'e inequality for $1\leq p<\infty$.

Any ball $B=B(x,r)$ is an interval $(a,b)$ or $[a,b)=[0,b)$ with $r\leq b-a\leq 2r$. Since the function $w$ is non-increasing, for any $x\in [a,b)$ we have
$$
\int_x^b w(t)\, dt\leq (b-x)w(x)\leq 2r w(x).
$$
This observation and H\"older's inequality yield
\begin{equation*}
\begin{split}
\frac{1}{\mu(B)} \int_B |u-u(a)|^p\, d\mu
&\leq
\frac{1}{\mu(B)}\int_a^b \left|\int_a^t g(x)\, dx\right|^p w(t)\, dt\\
&\leq
\frac{(2r)^{p-1}}{\mu(B)}
\int_a^b\left(\int_a^t g(x)^p\, dx\right) w(t)\, dt\\ 
&=
\frac{(2r)^{p-1}}{\mu(B)}
\int_a^b\left(\int_x^b w(t)\, dt\right) g(x)^p\, dx\\
&\leq 
\frac{(2r)^p}{\mu(B)}\int_a^b g(x)^p w(x)\, dx
\end{split}    
\end{equation*}
so
\begin{equation}
\label{eq21}
\left(\mvint_B|u-u_B|^p\, d\mu\right)^{1/p}\leq
2\left(\mvint_B|u-u(a)|^p\, d\mu\right)^{1/p}\leq 4r\left(\mvint_B g^p\, d\mu\right)^{1/p}.
\end{equation}
Note that since the measure $\mu$ is not doubling, inequality \eqref{eq21} cannot hold with 
any exponent $q>p$ on the left hand side as otherwise we would arrive to a contradiction with Theorem~\ref{EPequiv}.
\end{example}

\begin{proof}[Proof of Theorem~\ref{EPequiv}]
The implication {\it (a)} $\Rightarrow$ {\it (b)} follows immediately from \cite[Theorem~1]{SMP2}. 
Note however, that the constant $\sigma$ in \eqref{eq20} might be larger than that in the $p$-Poincar\'e inequality (see \eqref{REW-1}). Thus
we will
focus on proving that {\it(a)} follows from {\it (b)}. 
To this end, suppose that $X$ satisfies the condition displayed in \eqref{eq20}. Making use of H\"older's inequality and the fact that
$1\leq p<q$, we may conclude that the $(q,p)$-Poincar\'e inequality in \eqref{eq20} implies that the space $(X,d,\mu)$ supports a $p$-Poincar\'e inequality (see \eqref{ppoin}).

There remains to show that the condition in \eqref{eq20} forces the measure $\mu$ to be doubling. Fix a ball $B:=B(x,r)$, $x\in X$, $r\in(0,\infty)$, and observe that specializing
\eqref{eq20} to the case when $B$ is replaced by $2\sigma B$ yields
\begin{equation}
\label{eq-JK2}
\left(\,\, \mvint_{2\sigma B} |u-u_{2\sigma B}|^q\, d\mu\right)^{1/q}
\leq 2\sigma rC_P\left(\,\, \mvint_{2\sigma^2 B} g^p\, d\mu\right)^{1/p},
\end{equation}
whenever $u\in L^{1}_{\rm loc}(X,\mu)$ and $g:X\to[0,\infty]$ is an upper gradient of $u$.
Since $p\geq1$, it follows from \eqref{eq-JK2} and
H\"older's inequality that,
\begin{align}
\label{eq-JK3}
\left(\,\, \mvint_{2\sigma B} |u|^q\, d\mu\right)^{1/q}
&\leq
\left(\,\, \mvint_{2\sigma B} |u-u_{2\sigma B}|^q\, d\mu\right)^{1/q}
+|u_{2\sigma B}|
\nonumber\\[4pt]
&\leq 2\sigma rC_P \left(\,\, \mvint_{2\sigma^2 B} g^p\, d\mu\right)^{1/p}+
\left(\,\, \mvint_{2\sigma B} |u|^p\, d\mu\right)^{1/p}.
\end{align}
We now define a collection of functions $\{u_j\}_{j\in\mathbb{N}}$
as follows:\ for each fixed $j\in\mathbb{N}$, let 
$r_j:=(2^{-j-1}+2^{-1})r$ and set $B_j:=B(x,r_j)$. Then
\begin{equation}\label{JG-1}
\frac{1}{2}r<r_{j+1}<r_j\leq\frac{3}{4}r,
\quad\forall\,j\in\mathbb{N}.
\end{equation}
For each $j\in\mathbb{N}$, let $u_j:X\to\mathbb{R}$ be the function
defined by setting for each $y\in X$,
\begin{eqnarray}\label{udef}
u_j(y):=
\left\{
\begin{array}{ll}
\,\qquad 1\quad &\mbox{if $y\in B_{j+1}$,}
\\[6pt]
\displaystyle\frac{r_j-d(x,y)}{r_j-r_{j+1}}
&\mbox{if $y\in B_j\setminus B_{j+1}$,}
\\[15pt]
\,\qquad 0 &\mbox{if $y\in X\setminus B_j$.}
\end{array}
\right.
\end{eqnarray}
Noting that $\displaystyle (r_j-r_{j+1})^{-1}=2^{j+2}r^{-1}$, 
a straightforward computation will show that $u_j$ is
$2^{j+2}r^{-1}$-Lipschitz on $X$ and that the function
$g_j:=2^{j+2}r^{-1}\chi_{B_j}$ is an upper
gradient of $u$, where $\chi_{B_j}$ denotes the characteristic function of the set $B_j$.
In particular, we have that $u_j\in L^{1}_{\rm loc}(X,\mu)$ and that the functions $u_j$ and $g_j$ satisfy \eqref{eq-JK3}.
Observe that for each fixed $j\in\mathbb{N}$, we have
(keeping in mind $\sigma\geq1$)
\begin{eqnarray}
\label{HD-1}
2\sigma rC_P\left(\,\, \mvint_{2\sigma^2 B} g_j^p\, d\mu\right)^{1/p}
= \sigma C_P2^{j+3}\left(\frac{\mu(B_j)}{\mu(2\sigma^2 B)}\right)^{1/p}
\leq \sigma C_P 2^{j+3}\left(\frac{\mu(B_j)}{\mu(2\sigma B)}\right)^{1/p}
\end{eqnarray}
and
\begin{eqnarray}
\label{HD-2}
\left(\,\, \mvint_{2\sigma B}|u_j|^p\, d\mu\right)^{1/p}
\leq \left(\frac{\mu(B_j)}{\mu(2\sigma B)}\right)^{1/p}.
\end{eqnarray}
Moreover, 
\begin{equation}
\label{HD-3}
\left(\,\, \mvint_{2\sigma B} |u_j|^{q}\, d\mu\right)^{1/q}\geq\left(\frac{\mu(B_{j+1})}{\mu(2\sigma B)}\right)^{1/q}.
\end{equation}
In concert, \eqref{HD-1}-\eqref{HD-3} and the extreme most
sides of the inequality in \eqref{eq-JK3}, give
\begin{align}
\label{HD-4}
\left(\frac{\mu(B_{j+1})}{\mu(2\sigma B)}\right)^{1/q}
&\leq\sigma C_P 2^{j+4}
\left(\frac{\mu(B_j)}{\mu(2\sigma B)}\right)^{1/p},
\quad\forall\,j\in\mathbb{N}.
\end{align}
Therefore
\begin{align}
\label{HD-42}
\mu(B_{j+1})^{1/q}
\leq\sigma C_P 2^{j+4}\frac{\mu(B_j)^{1/p}}{\mu(2\sigma B)^{(q-p)/pq}},
\quad\forall\,j\in\mathbb{N}.
\end{align}
With $\alpha:=q/p\in(1,\infty)$ we 
raise both sides of the 
inequality in \eqref{HD-42} to the power $p/\alpha^{j-1}$ in order to obtain
\begin{align}
\label{HD-5}
\mu(B_{j+1})^{1/\alpha^{j}}\leq 
2^{p(j+4)/\alpha^{j-1}}
\bigg(\frac{\sigma C_P}{\mu(2\sigma B)^{(q-p)/pq}}\bigg)^{p/\alpha^{j-1}}\mu(B_j)^{1/\alpha^{j-1}},
\quad\forall\,j\in\mathbb{N}.
\end{align}
If we let $P_j:=\mu(B_j)^{1/\alpha^{j-1}}$, then the
inequality in \eqref{HD-5} becomes
\begin{align}\label{HD-6}
P_{j+1}\leq 
2^{p(j+4)/\alpha^{j-1}}
\bigg(\frac{\sigma C_P}{\mu(2\sigma B)^{(q-p)/pq}}\bigg)^{p/\alpha^{j-1}}P_j,
\quad\forall\,j\in\mathbb{N},
\end{align}
which, together with an inductive argument and the fact that
$P_1\leq\mu(B)$, implies
\begin{align}\label{HD-7}
P_{j+1}&\leq P_1\prod_{k=1}^j
\left[2^{p(k+4)/\alpha^{k-1}}
\bigg(\frac{\sigma C_P}{\mu(2\sigma B)^{(q-p)/pq}}\bigg)^{p/\alpha^{k-1}}\right]
\nonumber\\[4pt]
&\leq \mu(B)\prod_{k=1}^j
\left[2^{p(k+4)/\alpha^{k-1}}
\bigg(\frac{\sigma C_P}{\mu(2\sigma B)^{(q-p)/pq}}\bigg)^{p/\alpha^{k-1}}\right],
\quad\forall\,j\in\mathbb{N}.
\end{align}

We claim that the product in \eqref{HD-7} converges as $j\to\infty$. Indeed, observe that
\begin{align}\label{QW-1}
\prod_{k=1}^\infty \bigg(\frac{\sigma C_P}{\mu(2\sigma B)^{(q-p)/pq}}\bigg)^{p/\alpha^{k-1}}
&=\bigg(\frac{\sigma C_P}{\mu(2\sigma B)^{(q-p)/pq}}\bigg)^{p\sum_{k=1}^\infty\alpha^{1-k}}
\nonumber\\[4pt]
&=
\bigg(\frac{\sigma C_P}{\mu(2\sigma B)^{(q-p)/pq}}\bigg)^{\frac{p\alpha}{\alpha-1}}
=\bigg(\frac{\sigma C_P}{\mu(2\sigma B)^{(q-p)/pq}}\bigg)^{\frac{pq}{q-p}},
\end{align}
and
\begin{equation}\label{QW-3}
\prod_{k=1}^\infty\big(2^{p(k+4)}\big)^{1/\alpha^{k-1}}
=2^{\sum_{k=1}^\infty p(k+4)\alpha^{1-k}}=:A(p,q)\in(0,\infty).
\end{equation}

On the other hand, it follows from \eqref{JG-1} that
\begin{equation}
0<\mu(2^{-1}B)^{1/\alpha^{j-1}}\leq
P_j=\mu(B_j)^{1/\alpha^{j-1}}\leq \mu(B)^{1/\alpha^{j-1}}<\infty,
\end{equation}
which, in turn, further implies $\displaystyle\lim\limits_{j\to\infty}P_j=1$.
Consequently, passing to the limit in \eqref{HD-7} yields
\begin{align}\label{ME12}
1&\leq \mu(B)\frac{\big(\sigma C_P\big)^{pq/(q-p)}}{\mu(2\sigma B)}A(p,q).
\end{align}
Hence,
\begin{align}\label{ME13}
\mu(2\sigma B)\leq \big(\sigma C_P\big)^{pq/(q-p)}A(p,q)\,
\mu(B).
\end{align}
Since $\sigma\geq1$, it follows that $\mu$ is doubling.
This finishes the proof of the second implication and,
in turn, the proof of the theorem.
\end{proof}

\begin{remark}
In the proof of the {\it (b)} $\Rightarrow$ {\it (a)} in Theorem~\ref{EPequiv}, one can compute the constant $A(p,q)$ appearing in \eqref{QW-3} by observing that (keeping in mind $\alpha=q/p$),
\begin{align}
{\sum_{k=1}^\infty p(k+4)\alpha^{1-k}}
&=p\sum_{k=1}^\infty \frac{k}{\alpha^{k-1}}+4p\sum_{k=1}^\infty \frac{1}{\alpha^{k-1}}
\nonumber\\[4pt]
&=\frac{p}{(1-1/\alpha)^2}+\frac{4p\alpha}{\alpha-1}
=\frac{pq^2}{(q-p)^2}+\frac{4pq}{q-p}.
\end{align}
Therefore,
$$A(p,q)=2^{\frac{pq^2}{(q-p)^2}+\frac{4pq}{q-p}}.$$
Hence, condition \eqref{eq20} implies that measure $\mu$ satisfies the following doubling condition:
$$
\mu(2B)\leq\Big(\sigma C_P2^{\frac{q}{(q-p)}+4}\Big)^{pq/(q-p)}\mu(B)\quad
\mbox{for all balls\,\,$B\subseteq X$.}
$$
\end{remark}

\noindent
{\bf Acknowledgements.} We would like to express our deepest gratitude to the referee who showed us Example~\ref{rem} and made other valuable comments.


\begin{thebibliography}{99}
%
\bibitem{AGH} {\sc Alvarado, R., G\'orka, P., Haj\l{}asz, P.:}
Sobolev embedding for $M^{1,p}$ spaces is equivalent to a lower bound of the measure. {\em Preprint 2019.} arXiv:1903.05793.
%
\bibitem{AlMi} {\sc Alvarado, R., Mitrea, M.:} Hardy spaces on Ahlfors-regular quasi metric spaces. A sharp theory. Lecture Notes in Mathematics, 2142. Springer, Cham, 2015.
%
\bibitem{AGS}
 {\sc Ambrosio, L., Gigli, N., Savar\'e, G.:} {\em Gradient flows in metric spaces and in the space of probability measures.} Second edition. Lectures in Mathematics ETH Z\"urich. Birkh\"auser Verlag, Basel, 2008.
%
\bibitem{AT}
{\sc  Ambrosio, L., Tilli, P.:}
{\em Topics on analysis in metric spaces.} 
Oxford Lecture Series in Mathematics and its Applications, 25. Oxford University Press, Oxford, 2004.
%
\bibitem{BB}
{\sc  Bj\"orn, A., Bj\"orn, J.:}
{\em Nonlinear potential theory on metric spaces.} 
EMS Tracts in Mathematics, 17. European Mathematical Society (EMS), Z\"urich, 2011.
%
\bibitem{BBJ}
{\sc  Bj\"orn, A., Bj\"orn, J., Lehrb\"ack, J.:}
The annular decay property and capacity estimates for thin annuli. 
{\em Collect.\ Math.} 68 (2017), 229-241. 

\bibitem{carron}
{\sc Carron, G.:}
In\'egalit\'es isop\'erim\'etriques et in\'egalit\'es de Faber-Krahn.
S\'emin.\ Th\'eor.\ Spectr.\ G\'eom., 13, Ann\'ee 1994--1995, pp.\ 63--66, Univ. Grenoble I, Saint-Martin-d'H\`eres, 1995. 
%
\bibitem{cheeger}
{\sc Cheeger, J.:} 
Differentiability of Lipschitz functions on metric measure spaces. 
{\em Geom.\ Funct.\ Anal.} 9 (1999), 428--517. 
%
\bibitem{CoWe71} 
{\sc Coifman, R.R., Weiss G.:} 
Analyse Harmonique Non-Commutative sur Certains Espaces Homogenes, Lecture Notes in Mathematics, Vol.\,242, Springer-Verlag, 1971.
%
\bibitem{CoWe77}
{\sc Coifman, R.R., Weiss G.:} 
Extensions of Hardy spaces and their use in analysis, Bull. Amer. Math. Soc., 83 (1977), no.\,4, 569--645.
%
\bibitem{gorka}
{\sc G\'orka, P.:}
In metric-measure spaces Sobolev embedding is equivalent to a lower bound for the measure.
{\em Potential Anal.} 47 (2017), 13--19. 
%
\bibitem{hajlasz}
{\sc Haj\l{}asz, P.:}
Sobolev spaces on metric-measure spaces. In: {\em Heat kernels and analysis on manifolds, graphs, and metric spaces (Paris, 2002)}, 
pp.\ 173--218, 
Contemp. Math., 338, Amer. Math. Soc., Providence, RI, 2003. 
%
\bibitem{SMP}
{\sc  Haj\l{}asz, P., Koskela, P.:}
Sobolev met Poincar\'e. {\em Mem.\ Amer.\ Math.\ Soc.} 145 (2000), no. 688.
%
\bibitem{SMP2}
{\sc  Haj\l{}asz, P., Koskela, P.:}
Sobolev meets Poincar\'e. 
{\em C. R. Acad.\ Sci.\ Paris S\'er.\ I Math.} 320 (1995), 1211--1215.
%
\bibitem{hajlaszkt1}
{\sc Hajl{}asz, P., Koskela, P., Tuominen, H.:}
Sobolev embeddings, extensions and measure density condition. 
{\em J. Funct.\ Anal.} 254 (2008), 1217--1234.
%
\bibitem{hajlaszkt2}
{\sc Hajl{}asz, P., Koskela, P., Tuominen, H.:}
Measure density and extendability of Sobolev functions.
{\em Rev.\ Mat.\ Iberoam} 24 (2008), 645--669. 
%
\bibitem{hebey}
{\sc Hebey, E.:}
{\em Sobolev spaces on Riemannian manifolds.} 
Lecture Notes in Mathematics, 1635. Springer-Verlag, Berlin, 1996.
%
\bibitem{HKM} 
{\sc Heinonen, J., Kilpel\"ainen, T., Martio, O.:}
{\em Nonlinear potential theory of degenerate elliptic equations.} Oxford Mathematical Monographs. Oxford Science Publications. The Clarendon Press, Oxford University Press, New York, 1993.
%
\bibitem{HeinonenK}
{\sc Heinonen, J., Koskela, P.:}
Quasiconformal maps in metric spaces with controlled geometry. 
{\em Acta Math.} 181 (1998), 1--61. 
%
\bibitem{HKST}
{\sc  Heinonen, J., Koskela, P., Shanmugalingam, N., Tyson, J. T.:} 
{\em Sobolev spaces on metric measure spaces. An approach based on upper gradients.} 
New Mathematical Monographs, 27. Cambridge University Press, Cambridge, 2015.
%
\bibitem{korobenko}
{\sc Korobenko, L.:}
Orlicz Sobolev inequalities and the doubling condition. {\em Preprint 2019.} arXiv:1906.04088.
%
\bibitem{korobenkomr}
{\sc Korobenko, L., Maldonado, D., Rios, C.:}
From Sobolev inequality to doubling.
{\em Proc.\ Amer.\ Math.\ Soc.} 143 (2015), 4017--4028. 
%
\bibitem{shanmugalingam}
{\sc  Shanmugalingam, N.:}
Newtonian spaces: an extension of Sobolev spaces to metric measure spaces. 
{\em Rev.\ Mat.\ Iberoamericana} 16 (2000), 243--279.
%
\end{thebibliography}
\end{document}